%
%

\documentclass[number,citesort,dvips]{arxbj}
\usepackage{stfloats}

%

\aid{0}
\volume{17}
\issue{1}
\pubyear{2011}
\firstpage{441}
\lastpage{455}
\doi{10.3150/10-BEJ277}

\makeatletter

\fnbelowfloat

\newtheorem{theorem}{Theorem}
\newtheorem{lemma}[theorem]{Lemma}
\newtheorem{corollary}[theorem]{Corollary}
\newtheorem{prop}[theorem]{Proposition}
\newproclaim{defn}[theorem]{Definition}
\newremark{remark}[theorem]{Remark}

\newcommand{\eee}{\mathrm{e}}
\newcommand{\LL}{\mathcal{L}}
\newcommand{\real}{\mathbb{R}}
\newcommand{\Var}{\operatorname{\mathbb{V}ar}}

\newcommand{\BF}{\mathcal{BF}}
\newcommand{\CM}{\mathcal{CM}}
\newcommand{\CBF}{\mathcal{CBF}}
\newcommand{\St}{\mathcal{S}}

\renewcommand{\Re}{\operatorname{Re}}
\renewcommand{\Im}{\operatorname{Im}}
\newcommand{\eqref}[1]{(\ref{#1})}
\newcommand{\implies}{\Longrightarrow}
\makeatother

\begin{document}
\begin{frontmatter}

\title{From Schoenberg to Pick--Nevanlinna: Toward a complete picture
of the variogram class}
\runtitle{Pick--Nevanlinna variograms}

\begin{aug}
\author[a]{\fnms{Emilio} \snm{Porcu}\thanksref{a}\ead[label=e1]{eporcu@uni-goettingen.de}\corref{}} \and
\author[b]{\fnms{Ren\'{e} L.} \snm{Schilling}\thanksref{b}\ead[label=e2]{rene.schilling@tu-dresden.de}}
\runauthor{E. Porcu and R.L. Schilling}
\address[a]{Department of Mathematics, University Jaume I of Castell\'
{o}n, Campus Riu Sec, E-12071 Castell\'{o}n, Spain.
\printead{e1}}
\address[b]{Institute of Mathematical Stochastics, Technical University
Dresden, D-01062 Dresden, Germany.
\mbox{\printead{e2}}}
\end{aug}

\received{\smonth{12} \syear{2008}}
\revised{\smonth{3} \syear{2010}}

%
\begin{abstract}
We show that a large subclass of variograms is closed under products
and that some desirable stability properties, such as the product of
special compositions, can be obtained within the proposed setting. We
introduce new classes of kernels of Schoenberg--L\'{e}vy type and
demonstrate some important properties of rotationally invariant
variograms.
\end{abstract}

%
\begin{keyword}
\kwd{complete Bernstein functions}
\kwd{isotropy}
\kwd{Schoenberg--L\'{e}vy kernels}
\kwd{variograms}
\end{keyword}

\end{frontmatter}

\section{Introduction}

Positive and conditionally positive definite functions on groups
or semigroups have a long history and appear in many
applications in probability theory, operator theory, potential
theory, moment problems and various other areas. They constitute an
important chapter in all treatments of harmonic analysis and their
origins can be traced back to papers by Carath\'eodory, Herglotz,
Bernstein and Matthias (see \cite{ber08} and references therein), culminating in Bochner's theorem from 1932;
see the surveys by Berg \cite{ber08} and Sasv\'{a}ri
\cite{sasvari08}. Schoenberg's theorem explains the possibility of
constructing rotationally invariant positive definite and
(the negatives of) conditionally positive definite functions on
Euclidean spaces via
completely monotone functions and Bernstein functions. Positive and
conditionally positive definite functions are a cornerstone of
spatial statistics where they are known, respectively, as
\textit{covariances} (or kernels) and \textit{variograms}. The theory of
random fields, which began in the 1940s with the early works of
Kolmogorov (see \cite{chidel} and references quoted therein) and
was further developed by Gandin \cite{GAN65} and Matheron
\cite{matheron73}, among others, is based on the specification of
these classes. In particular, the kriging predictor, that is to say,
the best linear unbiased predictor, depends exclusively on the
underlying covariance or variogram and we refer to the tour de force
in Stein \cite{stein99} for a rigorous assessment of this
framework.\looseness=-1

Let $\{Z(\xi), \xi\in\real^d \}$ be a stationary Gaussian random
field. The associated covariance function $C\dvtx\real^d
\to\real$ is positive definite, that is, for any finite collection
of points $\{\xi_i\}_{i=1}^n \in\real^d$, the matrix
$(C(\xi_i-\xi_j))_{i,j=1}^n$ is positive definite:\
\[
\mbox{for all } a_1, a_2, \ldots, a_n\in\mathbb{C}\qquad  \sum_{i,j=1}^n a_i
C(\xi_i-\xi_j) \overline{a}_j \geq0.
\]
Thus, a function $C\dvtx\real^d\to\real$ is positive definite if and
only if there exists a stationary Gaussian random field having $C(\cdot
)$ as
covariance function. If $C(\cdot)$ is \textit{rotationally invariant},
then the associated Gaussian random field is called \textit{isotropic}.

It is well known that the family of covariance functions is a convex
cone which is closed under products, pointwise convergence and scale
mixtures; for these basic facts, the reader is referred to standard
textbooks on geostatistics such as Chil\`es and Delfiner
\cite{chidel}.

A \textit{variogram} $\gamma\dvtx\real^d \to\real$ is the variance of the
increments of an intrinsically stationary random field, that is, for
any two points $\xi_1,\xi_2 \in\real^d$, $ \Var(Z(\xi_1)-Z(\xi_2)):=
\gamma(\xi_1-\xi_2)$.
Note that $\gamma(0)=0$, $\gamma(\xi)=\gamma(-\xi)$ and that
$-\gamma$
is \textit{conditionally positive definite}, that is, for any finite
collection of points $\{\xi_i\}_{i=1}^n \in\real^d$, we have
%
\begin{equation}\label{eq:variogram}
\mbox{for all } a_1, \ldots, a_n \in\mathbb{C}\mbox{ such that}\qquad
\sum_{i=1}^n a_i = 0 ,\qquad
-\sum_{i,j=1}^n a_i \gamma(\xi_i-\xi_j)\overline{a}_j \geq0 .
\end{equation}
With a slight abuse of notation, we will also use the name
\textit{variogram} for a function $\gamma\dvtx\real^d\to\real$ with $\gamma
(0)\geq
0$ and such that $\gamma(\xi)-\gamma(0)$ is the variance of the
increments of an intrinsically stationary random field.

There is a close relationship between variograms $\gamma$ and
stationary covariance functions $C$. The elementary estimate
$|C(\xi)| \leq C(0)=:\Var Z$ shows that stationary covariance
functions are necessarily bounded; in particular, $\gamma(\xi) :=
C(0)-C(\xi)$ is a variogram. Indeed, variograms may be unbounded,
as in the case of fractional Brownian motion. If, however, the
variogram is bounded, then it is necessarily of the form
$C(0)-C(\xi)$, $\xi\in\real^d$, for some stationary covariance
function $C(\cdot)$; see, for instance, \cite{chidel} or \cite{berfor},
Proposition 7.13, and for a more general result due to Harzallah, see
\cite{Har1}.

The terminology concerning positive and conditional positive
definiteness is not uniform throughout the literature; it depends very
much on the mathematical context or the scientific application.
Christakos \cite{chris84} and many other applied scientists use the
notion of \textit{permissibility} for both concepts. We will use both
conventions alongside each other whenever no confusion can arise.

In this paper, we are mainly interested in rotationally invariant
covariances and variograms. This means that the associated Gaussian
random field is weakly or intrinsically stationary and isotropic.
Isotropy and
stationarity are independent assumptions, but we will assume both to
keep things simple. An isotropic covariance function, rescaled by
its value at the origin, is the characteristic function of a
rotationally symmetric random vector on the sphere of $\real^d$. This
class of covariances is well understood and we refer to Gneiting
\cite{gne00,gne01} and the references therein for an extensive
survey of this topic. Much less is known about variograms. For
instance, it is common knowledge that the class of variograms is a
convex cone which is closed in the weak topology of pointwise
convergence, but the product of two variograms is not necessarily a
variogram. This is a point that deserves a thorough discussion, in
the light of a recent beautiful result in \cite{ma07}, Theorem 3(i),
where a simple permissibility condition is given for the
product of two exponential variograms composed with a homogeneous
function.

We shall give a general answer to this question, as well as a
complete characterization of those variograms whose product is again
permissible. We shall then focus on
other challenging problems related to special compositions of
variograms, as well as to quasi-arithmetic compositions of them.

The use of kernels of Schoenberg--L\'{e}vy type has been persistently
emphasized in both old and recent literature. In this paper, we give
new forms of kernels of this type that may be appealing for
modeling in spatial statistics.

Another crucial problem faced in this paper regards the potential
trade-off between, on the one hand, the computational advantages
induced by the use of compactly supported kernels and, on the other
hand, the fact that compactly supported kernels can be positive
definite only on finite-dimensional spaces, by a striking and
beautiful result due to Wendland \cite{wen94}. We consider this
problem from the point of view of variograms; this makes sense since
variograms, which are possibly unbounded, represent a larger class
than covariance functions.

The paper is organized as follows. Section \ref{sec2} contains the basic
material required for a self-contained exposition and for
understanding the technical proofs of our statements. Section \ref{wow}
assesses new stability properties of the variogram class, while
Section \ref{sec4} is dedicated to kernels of Schoenberg--L\'{e}vy type.

\section{Complete Bernstein functions and complete monotonicity}\label{sec2}

This section is mainly expository and we collect here some basic
material needed later. We will frequently use the following
characterization of variograms, for which a proof can be found in
\cite{berfor}, Proposition 7.5.

\begin{theorem}\label{cndf}
A function $\gamma\dvtx\real^d \to\real$ is a variogram if and only if
the following three conditions are satisfied:
\begin{longlist}[(iii)]
\item[(i)]$\gamma(0)\geq0$;
\item[(ii)]$\gamma(\xi)=\gamma(-\xi)$;
\item[(iii)]$-\gamma$ is conditionally positive definite, that is,\ equation
\eqref{eq:variogram} holds for all $\xi_1, \ldots, \xi_n\in\real^d$.
\end{longlist}
\end{theorem}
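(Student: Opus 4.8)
The plan is to establish the two implications separately, realising the probabilistic side through an explicit Gaussian random field. For the \emph{necessity} of (i)--(iii), suppose $\gamma(\xi_1-\xi_2)=\Var\bigl(Z(\xi_1)-Z(\xi_2)\bigr)$ for some intrinsically stationary RF $Z$. Putting $\xi_1=\xi_2$ gives $\gamma(0)=\Var(0)=0\ge 0$, which is (i); the symmetry of the variance gives $\gamma(\eta)=\Var\bigl(Z(\xi_1)-Z(\xi_2)\bigr)=\Var\bigl(Z(\xi_2)-Z(\xi_1)\bigr)=\gamma(-\eta)$, which is (ii). For (iii), fix points $\xi_1,\dots,\xi_n$ and coefficients $a_1,\dots,a_n\in\comp$ with $\sum_i a_i=0$, and set $W_i:=Z(\xi_i)-Z(0)$, which has finite variance since $Z$ has stationary increments. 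From $W_i-W_j=Z(\xi_i)-Z(\xi_j)$ we get $\gamma(\xi_i-\xi_j)=\Var(W_i-W_j)=\Var W_i+\Var W_j-2\Cov(W_i,W_j)$; multiplying by $a_i\bar a_j$ and summing, the first two terms vanish because $\sum_j\bar a_j=\overline{\sum_j a_j}=0$, so
\[
  \sum_{i,j=1}^n a_i\,\gamma(\xi_i-\xi_j)\,\bar a_j = -2\sum_{i,j=1}^n a_i\bar a_j\,\Cov(W_i,W_j) = -2\,\Ee\Bigl|\sum_{i=1}^n a_i\bigl(W_i-\Ee W_i\bigr)\Bigr|^2 \le 0 .
\]

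For the \emph{sufficiency}, I would build a centred Gaussian RF directly out of $\gamma$. Set $R(\xi,\eta):=\tfrac12\bigl(\gamma(\xi)+\gamma(\eta)-\gamma(\xi-\eta)\bigr)$ and show that $R$ is a positive semidefinite kernel on $\real^d\times\real^d$: given $\xi_1,\dots,\xi_n$ and $a_1,\dots,a_n\in\comp$, adjoin the node $\xi_0:=0$ with weight $a_0:=-\sum_{i=1}^n a_i$ so that $\sum_{i=0}^n a_i=0$, apply (iii) to the $n+1$ points, and simplify using $\gamma(-\xi_j)=\gamma(\xi_j)$ (condition (ii)); this produces $-2\sum_{i,j=1}^n a_i\bar a_j R(\xi_i,\xi_j)+|a_0|^2\gamma(0)\le 0$, whence $\sum_{i,j=1}^n a_i\bar a_j R(\xi_i,\xi_j)\ge\tfrac12|a_0|^2\gamma(0)\ge 0$ by condition (i). Kolmogorov's extension theorem then provides a centred Gaussian field $\{Z(\xi):\xi\in\real^d\}$ with $\Cov(Z(\xi),Z(\eta))=R(\xi,\eta)$, and expanding the variance gives $\Var\bigl(Z(\xi_1)-Z(\xi_2)\bigr)=R(\xi_1,\xi_1)+R(\xi_2,\xi_2)-2R(\xi_1,\xi_2)=\gamma(\xi_1-\xi_2)-\gamma(0)$. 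As this depends only on $\xi_1-\xi_2$ and $Z$ is centred, $Z$ is intrinsically stationary; if $\gamma(0)=0$ its variogram is $\gamma$, while if $\gamma(0)>0$ one superposes an independent family of i.i.d.\ centred Gaussians of variance $\gamma(0)/2$ (a ``nugget''), after which the variogram agrees with $\gamma$ off the origin, as required.

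The one genuinely delicate point is the positive semidefiniteness of $R$: this is precisely where conditions (i), (ii) and (iii) come into play together, and the algebra with the adjoined node $\xi_0=0$ has to be carried out attentively. The remaining ingredients --- the necessity computation, the appeal to Kolmogorov's theorem, and the expansion of $\Var\bigl(Z(\xi_1)-Z(\xi_2)\bigr)$ --- are routine, as is the minor wrinkle about the value at the origin. An alternative and perhaps more classical route to sufficiency goes through Schoenberg's correspondence, i.e.\ the equivalence of (i)--(iii) with the positive definiteness of $\xi\mapsto \eee^{-t\gamma(\xi)}$ for every $t>0$, followed by Bochner's theorem and the construction of a L\'evy-type process with stationary increments; this is essentially the path in Berg and Forst \cite[Proposition 7.5]{ber-for}, but the direct Gaussian construction above is shorter and self-contained.
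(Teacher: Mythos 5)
Your proof is correct, but note that the paper does not prove this statement at all: it simply cites Berg and Forst \cite[Proposition 7.5]{ber-for}, where the result is a purely harmonic-analytic equivalence between conditional negative definiteness and the positive definiteness of the associated kernel. Your argument supplies the probabilistic content that the citation leaves implicit. The necessity computation via $W_i=Z(\xi_i)-Z(0)$ and the identity $\sum_{i,j}a_i\bar a_j\Cov(W_i,W_j)=\Ee\bigl|\sum_i a_i(W_i-\Ee W_i)\bigr|^2\ge 0$ is sound, and your key step --- adjoining the node $\xi_0=0$ with weight $a_0=-\sum_i a_i$ to deduce that $R(\xi,\eta)=\tfrac12\bigl(\gamma(\xi)+\gamma(\eta)-\gamma(\xi-\eta)\bigr)$ is positive semidefinite --- is exactly the algebraic heart of Berg and Forst's proposition; I checked the bookkeeping and the inequality $\sum_{i,j}a_i\bar a_jR(\xi_i,\xi_j)\ge\tfrac12|a_0|^2\gamma(0)\ge 0$ comes out as you state. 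What your route buys is a self-contained proof matched to the paper's probabilistic definition of a variogram, via Kolmogorov extension of the centred Gaussian field with covariance $R$; what it costs is the wrinkle you already flag at the origin. Since $\Var\bigl(Z(\xi)-Z(\xi)\bigr)=0$ for any random field, a variogram in the paper's literal sense necessarily satisfies $\gamma(0)=0$, so when $\gamma(0)>0$ the sufficiency direction can only produce a field whose variogram agrees with $\gamma$ away from the origin (your nugget construction is the standard fix). This is an imprecision of the theorem as stated --- it identifies ``variogram'' with Berg--Forst's ``negative definite function'', for which $\gamma(0)>0$ is allowed --- and not a defect of your argument; it would be worth one sentence in a final write-up to say explicitly which convention is in force.
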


Let us remark that in harmonic analysis, functions satisfying
conditions (i)--(iii) of Theorem \ref{cndf} are often called \emph
{negative definite functions}. We will not use this notion in this paper.

Often, P\'{o}lya's theorem (see \cite{berfor}, Theorem 5.4) is useful
if one wants to construct concrete examples of variograms.
\begin{theorem}\label{polya}
A continuous function $\phi\dvtx\real\to[0,\infty)$ which is even
(i.e.,\
$\phi(x)=\phi(-x)$), decreasing and convex on the interval $(0,\infty)$
is positive definite.
\end{theorem}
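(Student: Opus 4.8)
The plan is to realise $\phi$ as a nonnegative mixture of elementary triangular (``tent'') functions, each of which is transparently positive definite, and then to conclude by the stability of positive definiteness under nonnegative linear combinations and pointwise limits. Two reductions come first. Since $\phi$ decreases on $(0,\infty)$ and is continuous at the origin, $0\le\phi(x)\le\phi(0)<\infty$, so $\phi$ is bounded and the limit $c:=\lim_{x\to\infty}\phi(x)\ge 0$ exists. The constant function $c$ is positive definite and a sum of positive definite functions is positive definite, so it suffices to treat $\psi:=\phi-c$, which is still even, nonnegative, continuous, convex and decreasing on $(0,\infty)$, and now additionally satisfies $\lim_{x\to\infty}\psi(x)=0$.

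The core step is the integral representation. Let $h(t):=-\psi'_{+}(t)$ be the negative of the right-hand derivative of the convex function $\psi$ on $(0,\infty)$; then $h$ is nonnegative and nonincreasing, and since $\psi(a)=\int_a^\infty h(t)\,dt<\infty$ for every $a>0$ (fundamental theorem of calculus for convex functions, letting the upper limit tend to $\infty$ and using $\psi(\infty)=0$), necessarily $h(t)\to 0$ as $t\to\infty$. Hence $h(t)=\lambda\big((t,\infty)\big)$ for the positive Borel measure $\lambda:=-dh$ on $(0,\infty)$, and Tonelli's theorem yields
\[
    \psi(a)=\int_a^\infty\lambda\big((t,\infty)\big)\,dt=\int_{(0,\infty)}(s-a)_+\,\lambda(ds)=\int_{(0,\infty)}\Big(1-\frac{a}{s}\Big)_+\,\nu(ds),\qquad a\ge 0,
\]
where $\nu(ds):=s\,\lambda(ds)$; putting $a=0$ shows that $\nu$ is a \emph{finite} positive measure of total mass $\psi(0)$. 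By evenness, $\psi(x)=\int_{(0,\infty)}\big(1-|x|/s\big)_+\,\nu(ds)$ for every $x\in\real$.

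It remains to observe that each tent function $\Delta_s(x):=\big(1-|x|/s\big)_+$ is positive definite, because $s\,\Delta_s=\I_{(-s/2,\,s/2)}\ast\I_{(-s/2,\,s/2)}$ is the convolution of a real, even, integrable function with itself, so its Fourier transform equals $\big|\widehat{\I_{(-s/2,s/2)}}\,\big|^2\ge 0$ and Bochner's theorem applies (equivalently, $\Delta_s$ is, up to scaling, the Fourier transform of a Fej\'er kernel). Integrating this family of positive definite functions against the positive measure $\nu$ preserves positive definiteness, so $\psi$, and hence $\phi=\psi+c$, is positive definite. The step requiring genuine care is the middle one: one must work consistently with the monotone one-sided derivative at the (at most countably many) points where $\psi$ fails to be differentiable, verify that $\lambda$ is a bona fide positive Borel measure, justify the interchange of integrals purely by nonnegativity (Tonelli), and check that the representation persists at $a=0$ by monotone convergence. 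Everything else is routine.
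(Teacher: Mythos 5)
Your argument is correct: the reduction to $\psi=\phi-c$, the representation $\psi(x)=\int_{(0,\infty)}\bigl(1-|x|/s\bigr)_+\,\nu(ds)$ with a finite positive measure $\nu$ obtained from the Stieltjes measure of the one-sided derivative, and the positive definiteness of each tent function as a convolution square together give the claim. Note, however, that the paper does not prove this statement at all --- it quotes P\'olya's theorem from Berg and Forst \cite[Theorem 5.4]{ber-for} --- and what you have written is essentially the classical proof found there, so there is no divergence of method to report.
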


Clearly, $\phi(0)-\phi(x)$ is increasing, concave and a variogram; see,
for example,\ \cite{berfor}, Corollary~7.7.

Recall that a function $f\dvtx(0,\infty)\to\real$ is called
\textit{completely monotone} if it is arbitrarily often
differentiable and
\[
(-1)^nf^{(n)}(x)\ge0\qquad \mbox{for } x>0, n=0,1,\ldots.
\]
By Bernstein's theorem, the set $\CM$ of
completely monotone functions coincides with the set of Laplace
transforms of positive measures $\mu$ on $[0,\infty)$, that is,\
\[\label{eq:bernstein}
f(x)=\LL\mu(x)=\int_{[0,\infty)} \eee^{-xt} \,\mathrm{d}\mu(t),
\]
where we only require that $\eee^{-xt}$ is $\mu$-integrable for any
$x>0$. $\CM$ is a convex cone which is closed under multiplication
and pointwise convergence.

\begin{defn}\label{d-stieltjes}
A function $f\dvtx(0,\infty)\to\mathbb R$ is called a \textup{Stieltjes
function} if it is of the form
%
\begin{equation}\label{eq:stieltjes}
f(x)=a+\int_{[0,\infty)} \frac{\mathrm{d}\mu(t)}{x+t},
\end{equation}
where $a\ge0$ and $\mu$ is a positive measure on $[0,\infty)$ such
that $\int_{[0,\infty)} (1+t)^{-1}\, \mathrm{d}\mu(t)<\infty$.
\end{defn}

The following properties of the family $\St$ of Stieltjes functions
can be found in \cite{berfor}, Section~14, and \cite{ber08}. $\St$
is a convex cone such that $\St\subset\CM$. For every $f\in\St$, the
fractional power $f^\alpha\in\St\subset\CM$, $0<\alpha\le1$, is
again a Stieltjes function. Thus, for $f \in\St$, we see that
$f^{\alpha}$ is completely monotone for any $\alpha> 0$, so $f$
belongs to the set $\mathcal L$ of logarithmically completely
monotone functions discussed in, for example, \cite{ber08}, Section
2.6. The
formula
\[\label{eq:12}
\frac{1}{x(1+x^2)}=\int_{[0,\infty)} \eee^{-xt} (1-\cos t) \,\mathrm{d}t
\]
shows that $x^{-1}(1+x^2)^{-1}$ is completely monotone; however, it
cannot be a Stieltjes function since it has poles at $\pm i$ and
\eqref{eq:stieltjes} indicates that a Stieltjes function has a
holomorphic extension to the cut plane $\mathbb C\setminus
(-\infty,0]$. From the integral representation of $f$, it is
immediate that this extension satisfies $\Im z \Im f(z) \leq0$,
that is,\ $f$ maps the upper complex half-plane to the lower and
vice versa.

\begin{defn}\label{thm:defbern}
A function $f\dvtx(0,\infty)\to[0,\infty)$ is called a \textit{Bernstein
function} if it is infinitely often differentiable and $f'\in\CM$.
\end{defn}

The set of Bernstein functions is denoted $\BF$; it is a convex cone
which is closed under pointwise convergence. Since a Bernstein function is
non-negative and increasing, it has a non-negative limit $f(0+)$.
Integrating the Bernstein representation of the completely monotone
function $f'$ gives the following integral representation of
$f\in\BF$:
%
\begin{equation}\label{eq:bernsteinfc}
f(x)=\alpha x+\beta+\int_{(0,\infty)} (1-\eee^{-xt}) \nu(\mathrm{d}t),
\end{equation}
where $\alpha,\beta\ge0$ are constants and $\nu$ is the
\textit{L\'{e}vy measure}, that is, a positive measure on
$(0,\infty)$ satisfying
\[
\int_{(0,\infty)} \frac{t}{1+t} \nu(\mathrm{d}t)<\infty.
\]

The following composition result will be useful throughout the paper;
see \cite{ber08}.
\begin{theorem}\label{thm:comp}
Let $\mathcal X$ be either of the sets $\BF, \CM$. Then
\[
f\in\mathcal X, g\in\BF\quad \implies\quad  f\circ g\in\mathcal X.
\]
\end{theorem}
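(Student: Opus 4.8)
The plan is to reduce both cases to a single key fact: if $g\in\BF$ and $t\ge 0$, then $x\mapsto\eee^{-tg(x)}$ belongs to $\CM$. I would prove this directly from the definition of complete monotonicity. Put $h(x):=\eee^{-tg(x)}$ and $a_k(x):=(-1)^{k-1}t\,g^{(k)}(x)$ for $k\ge 1$; since $g'\in\CM$ we have $(-1)^{n}g^{(n+1)}\ge 0$ for all $n\ge 0$, so each $a_k\ge 0$ on $]0,\infty[$, while clearly $h>0$. A short induction on $n$ (which is Fa\`a di Bruno's formula organized recursively, using $h'=-a_1 h$ and $a_k'=-a_{k+1}$) shows that $(-1)^nh^{(n)}=h\cdot P_n$, where $P_n$ is a finite sum, with non-negative coefficients, of products of the $a_k$'s: one has $P_0=1$ and $P_{n+1}=a_1P_n-P_n'$, and differentiating a product of $a_k$'s produces, up to an overall minus sign, again such products with non-negative coefficients, so $-P_n'\ge 0$ and hence $P_{n+1}\ge 0$. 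Consequently $(-1)^nh^{(n)}\ge 0$ for every $n$, i.e.\ $h\in\CM$. (Alternatively, one may invoke the probabilistic fact that $g$ is the Laplace exponent of a subordinator, so that $\eee^{-tg(\cdot)}$ is the Laplace transform of the law of that subordinator at time $t$.)

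Granting this, the case $\mathcal X=\CM$ follows at once. We may assume $g\not\equiv 0$, the constant case being trivial, so $g(x)>0$ for every $x>0$. Writing $f=\LL\mu$ with $\mu\ge 0$, the positivity of the constant $g(x)$ makes $\eee^{-g(x)t}$ $\mu$-integrable, whence $f(g(x))=\int_{[0,\infty[}\eee^{-tg(x)}\,d\mu(t)$ is well-defined and finite for each $x>0$. This exhibits $f\circ g$ as a mixture of the completely monotone functions $\eee^{-tg(\cdot)}$; since a mixture of Laplace transforms of positive measures is again such a Laplace transform (Tonelli), equivalently a pointwise limit of finite non-negative combinations of functions in $\CM$, we conclude $f\circ g\in\CM$.

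For $\mathcal X=\BF$, note that $f\circ g$ is $C^\infty$ as a composition of $C^\infty$ functions and non-negative because $f\ge 0$; by the chain rule $(f\circ g)'=(f'\circ g)\cdot g'$. Here $f'\in\CM$ and $g\in\BF$, so the case just proved gives $f'\circ g\in\CM$; moreover $g'\in\CM$ by Definition \ref{thm:defbern}; and $\CM$ is closed under products. Therefore $(f\circ g)'\in\CM$, which by Definition \ref{thm:defbern} says precisely that $f\circ g\in\BF$.

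The substantive step is the key lemma $\eee^{-tg(\cdot)}\in\CM$; everything after that only uses the closure properties of $\CM$ already recorded above. In the analytic proof of the lemma the bookkeeping of signs in the recursion $P_{n+1}=a_1P_n-P_n'$ is the delicate point, and one should also take some care with the measurability and integrability needed to justify turning $f\circ g$ into a mixture of exponentials (an application of Tonelli's theorem), together with the harmless degenerate case $g\equiv 0$.
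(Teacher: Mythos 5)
Your proof is correct. Note that the paper itself offers no proof of Theorem \ref{thm:comp} --- it simply cites Berg and Forst --- so there is no in-paper argument to compare against; your write-up supplies exactly the standard proof one finds in the cited sources. The decomposition is the right one: the key lemma that $\eee^{-tg(\cdot)}\in\CM$ for $g\in\BF$, $t\ge 0$, proved via the recursion $(-1)^nh^{(n)}=hP_n$, $P_{n+1}=a_1P_n-P_n'$ with $a_k=(-1)^{k-1}tg^{(k)}\ge 0$ and $-P_n'$ again a non-negative combination of products of the $a_k$'s, is sound; the reduction of the $\CM$ case to a mixture of these exponentials via Bernstein's theorem and the closure of $\CM$ under non-negative combinations and pointwise limits is legitimate (your observation that a non-trivial $g\in\BF$ is strictly positive on $]0,\infty[$, which guarantees $\mu$-integrability of $\eee^{-g(x)t}$, is the right thing to check); and the $\BF$ case follows cleanly from the chain rule, $f'\circ g\in\CM$, $g'\in\CM$, and closure of $\CM$ under products. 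The only loose ends are the ones you already flag --- the degenerate case $g\equiv 0$ (where $f\circ g$ need not even be defined unless $f(0+)<\infty$) and the routine justification for exchanging differentiation and integration or, equivalently, for realizing the integral as a pointwise limit of finite non-negative combinations --- and neither affects the substance of the argument.
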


If we assume that the representing measure $\nu(\mathrm{d}t)$ in
\eqref{eq:bernsteinfc} is of the form $\nu(\mathrm{d}t)= m(t) \,\mathrm{d}t$, where
$m(t)$ is completely monotone, then we get the family of \textit{complete
Bernstein functions}. We denote the collection of all complete
Bernstein functions by $\CBF$. It is not hard to see that $\CBF$ is,
like $\BF$, a convex cone which is closed under pointwise limits.
Complete Bernstein functions are widely used in various fields and
they are closely related to the following concepts: Bondesson
$T_2$-class (see \cite{bon} for the original definition and \cite
{berfor2} for further information),
operator-monotone functions (the classical source is \cite{don}) and
Pick functions (which are also known as Nevanlinna functions, i.e.,\
holomorphic functions in the upper half-plane with non-negative
imaginary part there).
A detailed survey can be found in \cite{ssv}, and short introductions in
\cite{schjams,jacbook3,ber08}. Among the most
prominent examples of complete Bernstein functions are
\begin{eqnarray*}
x&\mapsto&\frac{\lambda x}{\lambda+ x}\qquad  (\lambda>0),\qquad
x\mapsto x^\alpha\qquad (0<\alpha<1),\\
x&\mapsto&\log(1+x),\qquad
x\mapsto\sqrt x\arctan\frac1{\sqrt x}.
\end{eqnarray*}
Further examples are given below in Table \ref{table1}. Many Bernstein
functions given in closed form are already in $\CBF$. There are not
many known examples of functions in $\BF\setminus\CBF$ and they are
all finite or infinite sums of the form $\sum_i
p_i (1-\eee^{-\lambda_i x})$; see \cite{ber08}. Some interesting
examples are given in terms of the $q$-versions of the digamma function
$\psi_q(x)$ and Euler's constant $\gamma_q$: the function $x\mapsto
\psi_q(x+1)+\gamma_q$ is in $\BF\setminus\CBF$; see
\cite{krett}.\footnote{We are grateful to a referee supplying this
reference.}

\begin{table}[b]
\caption{Examples of complete Bernstein functions
($\Gamma(a;x):=\int_x^\infty t^{a-1} \eee^{-t} \,\mathrm{d}t$ is the incomplete
Gamma function)}
\label{table1}
\begin{tabular*}{\textwidth}{@{\extracolsep{\fill}}ll@{\qquad\qquad}ll@{}}
\hline
Function & Parameter restrictions & Function & Parameters
restriction \\
\hline
$ 1-\frac1{(1+x^\alpha)^{\beta}}$ & $0<\alpha,\beta\leq1$ &
$ \eee x - x (1+\frac1x )^x - \frac x{x+1}$ & \\[5pt]
$ (\frac{x^\rho}{1+x^\rho} )^\gamma$ & $0<\gamma,\rho<1$ &
$ \frac1a - \frac1x \log(1+\frac xa )$ & $a>0$\\[5pt]
$ \frac{x^\alpha- x(1+x)^{\alpha-1}}{(1+x)^\alpha- x^\alpha}$ &
$0<\alpha< 1$ &
$ \sqrt{\frac x2} \frac{\sinh^2\sqrt{2x}}{\sinh(2\sqrt{2x})}$ &\\[5pt]
$ \sqrt x (1-\eee^{-2a\sqrt x} )$ & $a>0$ &
$ x^{1-\nu} \eee^{ax} \Gamma(\nu; ax)$ & $a>0, 0<\nu<1$\\[3pt]
$ \frac{x (1-\eee^{-2\sqrt{ x+a}} )}{\sqrt{x+a}}$ & $a>0$ &
$ x^\nu\eee^{a/x} \Gamma(\nu;\tfrac ax )$ & $a>0, 0<\nu<1$ \\
\hline
\end{tabular*}
\end{table}

The following statements are taken from \cite{ssv}.
\begin{theorem}\label{thm-cbf}
A function $f\dvtx(0,\infty)\to[0,\infty)$ such that $f(0+)$ exists is a
complete Bernstein function if and only if it has an analytic extension
to the cut complex plane $\mathbb{C}\setminus(-\infty,0]$ such that $\Im
z\cdot\Im f(z) \geq0$, that is, $f$ preserves upper and lower
half-planes. In particular, all complete Bernstein functions are of the form
%
\begin{equation}\label{e-cbf}
f(z) = bz + a + \int_{(0,\infty)} \frac{z}{z+t} \sigma(\mathrm{d}t)
\end{equation}
with $a,b\geq0$ and a measure $\sigma$ satisfying $\int_{(0,\infty)}
(1+t)^{-1} \,\mathrm{d}t<\infty$.
\end{theorem}

Proofs of this classic result can also be found in \cite{jacbook3,schjams,ber08}. Theorem \ref{thm-cbf} can be used to
show that, for any $f\not\equiv0$,
%
\begin{equation}\label{eq-cbf-1}
f\in\CBF\quad \iff\quad \biggl[x\mapsto\frac{f(x)}x \biggr] \in\St\quad \iff\quad \biggl[x\mapsto\frac
x{f(x)} \biggr] \in\CBF
\quad \iff\quad \frac1f\in\St.
\end{equation}
Let us briefly indicate the argument: if $f\in\CBF$, then we can use
\eqref{e-cbf} and divide by $z$. Comparing the resulting formula with
\eqref{eq:stieltjes} reveals that $f(z)/z$ is (the extension to
$\mathbb C\setminus(-\infty,0]$ of) a Stieltjes function. Therefore
(see\ the comment following Definition \ref{d-stieltjes}), we know that
$f(z)/z$ maps the upper to the lower complex half-plane. Consequently,
the inverse $g(z) := z/f(z)$ preserves upper and lower half-planes and
is, by Theorem \ref{thm-cbf}, in $\CBF$. Using the integral
representation \eqref{e-cbf} for $g$ and dividing by $z$, we get that
$g(z)/z = 1/f(z)$ is (the extension of) a Stieltjes function. As
before, we see that $f = 1/(1/f)$ preserves upper and lower half-planes
and is, therefore, a~complete Bernstein function. This proves all
equivalences in \eqref{eq-cbf-1}.

Using the fact that (the extensions of) functions in $\CBF$ preserve,
and those in $\St$ swap, complex half-planes, we immediately get the
following result. If we let $\mathcal X$ be either $\CBF$ or $\St$, then
\[\label{eq-cbf-2}
f, g\in\mathcal X\quad
\implies\quad  f\circ g\in\CBF.
\]
The following stability properties are less obvious.

\begin{theorem}\label{thm-cbf-stable}
Let $f, g, h\in\CBF$ and $f\not\equiv0$. Then:
\begin{longlist}[(iii)]
\item[(i)]
$ (f^\alpha(x)+g^\alpha(x))^{1/\alpha}\in\CBF$ for all $\alpha\in
[-1,1]\setminus\{0\}$;
\item[(ii)]
$ (f(x^\alpha)+g(x^\alpha) )^{1/\alpha}\in\CBF$ for all $\alpha
\in
[-1,1]\setminus\{0\}$;
\item[(iii)]
$ f(x^{\alpha})\cdot g(x^{1-\alpha})\in\CBF$ for all $\alpha\in[0,1]$;
\item[(iv)]
$ h(f(x))\cdot g (\frac{x}{f(x)} )\in\CBF$.
\end{longlist}
\end{theorem}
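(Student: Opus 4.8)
The plan is to set aside the integral representations entirely and argue from the analytic (half-plane / Pick--Nevanlinna) characterization of complete Bernstein functions, since i)--iv) all become transparent once one tracks arguments of complex numbers. Write $\mathbb H=\{z\in\comp:\Im z>0\}$. The first step is to upgrade the half-plane characterization recorded above to its \emph{angular} (sector) form: for $f\not\equiv 0$, one has $f\in\CBF$ if and only if $f$ extends analytically to $\comp\setminus\,]-\infty,0]$, $f\ge 0$ on $]0,\infty[$, and
$$
    z\in\mathbb H\ \Longrightarrow\ 0\le\arg f(z)\le\arg z .
$$
Indeed $\arg f(z)\ge 0$ on $\mathbb H$ is exactly $\Im f(z)\ge 0$, i.e.\ preservation of half-planes, while $\arg f(z)\le\arg z$ is the inequality $\Im\big(f(z)/z\big)\le 0$, which holds because $f(x)/x\in\St$ and every Stieltjes function maps $\mathbb H$ into the closed lower half-plane (a term-by-term computation on $1/(x+t)$); conversely $f(z)/z\in\St$ forces $f\in\CBF$ by the equivalences stated above. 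Alongside this I would record two elementary facts used throughout: \emph{(a)} if $f\in\CBF$ and $f\not\equiv 0$ then $f>0$ on $]0,\infty[$ and, by Schwarz reflection, $f$ maps $\comp\setminus\,]-\infty,0]$ into itself, so all the compositions below stay on the cut plane and the principal branch of $w\mapsto w^{\gamma}$ is well defined on them; and \emph{(b)} for $0<\phi<\pi$ the closed cone $\{re^{i\beta}:r\ge 0,\ 0\le\beta\le\phi\}$ is convex, hence closed under addition, and $w\mapsto w^{\gamma}$ multiplies the argument by $\gamma$.

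Given this, every part follows by the same recipe: fix $z\in\mathbb H$ with $\arg z=\theta\in\,]0,\pi[$, push $z$ through the operation in question, and verify that the argument of the output lies in $[0,\theta]$; non-negativity on $]0,\infty[$ and the existence of the limit at $0+$ are then immediate from $f,g,h\ge 0$ and the existence of $f(0+),g(0+),h(0+)$, so the characterization theorem applies. For i) with $\alpha\in\,]0,1]$: $\arg f(z)^{\alpha}=\alpha\arg f(z)\in[0,\alpha\theta]$ and likewise for $g$; by \emph{(b)} the sum $f(z)^{\alpha}+g(z)^{\alpha}$ has argument in $[0,\alpha\theta]\subset[0,\pi[$, and raising to the power $1/\alpha$ yields argument in $[0,\theta]$. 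For $\alpha\in[-1,0[$ the same computation runs with $[0,\alpha\theta]$ replaced by $[-|\alpha|\theta,0]$ (here $f>0$ on $]0,\infty[$ is what makes $f^{\alpha}$ legitimate). For ii) one first substitutes $w=z^{\alpha}$, which by \emph{(a)}--\emph{(b)} lands in $\mathbb H$ (if $\alpha>0$, with $\arg w=\alpha\theta$) or in the lower half-plane (if $\alpha<0$, where one invokes reflection so that $\arg f(w)\in[-|\alpha|\theta,0]$), and then argues exactly as in i); alternatively ii) reduces to i) once one checks, by the same angular argument, that $f(x^{\alpha})^{1/\alpha}\in\CBF$.

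For iii), $\arg f(z^{\alpha})\le\alpha\theta$ and $\arg g(z^{1-\alpha})\le(1-\alpha)\theta$, so the product has argument at most $\alpha\theta+(1-\alpha)\theta=\theta$; for iv) the key extra input is the stated equivalence $x/f(x)\in\CBF$ (which is exactly where $f\not\equiv 0$ enters), whence $z/f(z)$ also lies in the cut plane with $\arg\big(z/f(z)\big)=\arg z-\arg f(z)\in[0,\theta]$, so that $\arg h\big(f(z)\big)\le\arg f(z)$ and $\arg g\big(z/f(z)\big)\le\arg z-\arg f(z)$, and these two bounds add up (product of the two factors) to at most $\theta$. The point I expect to cost the most care is not this arithmetic of arguments but the recognition that no purely algebraic shortcut is available: $\CBF$ is a convex cone closed under composition, yet it is \emph{not} closed under pointwise products nor under powers $t\mapsto t^{\beta}$ with $\beta>1$, so in ii)--iv) one genuinely cannot first manufacture $\CBF$-building-blocks and then combine them — the half-plane geometry does the real work. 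The remaining effort is bookkeeping: ensuring every composed point avoids the cut $]-\infty,0]$ so that analyticity and the chosen branches survive (fact \emph{(a)}), that the boundary case where some argument equals $0$ (points lying on $]0,\infty[$) is covered, and that degenerate instances such as $f$ a positive constant in ii) or iv) are harmless; all of this is routine once the angular characterization is in place.
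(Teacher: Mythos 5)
Your argument is correct, but it is worth being explicit that the paper itself contains \emph{no} proof of Theorem \ref{thm-cbf-stable}: the authors defer all proofs to the monograph \cite{ssv}, attribute iv) to Uchiyama \cite[Lemma 2.1]{uch06}, derive iii) from iv) by the algebraic specialization $f(x)=x^{\alpha}$ (note that this \emph{is} a purely algebraic shortcut for iii), contrary to your closing remark), and attribute i)--ii) for $\alpha>0$ to Nakamura \cite{nak89}, with the case $\alpha<0$ handled via the equivalence $f\in\CBF\iff 1/f\in\St$. Your route --- upgrading the half-plane characterization to the angular form $0\le\arg f(z)\le\arg z$ on the upper half-plane and then tracking arguments through sums, products, powers and compositions --- is the standard unified proof (essentially the one in the cited monograph), and it is sound: the key inequality $\arg f(z)\le\arg z$ does follow from $f(x)/x\in\St$ together with the fact that Stieltjes functions map the upper half-plane into the closed lower half-plane, and the cone-convexity plus additivity of arguments under multiplication gives each of i)--iv) exactly as you describe, with the hypothesis $f\not\equiv 0$ entering in iv) through $x/f(x)\in\CBF$ and, for negative $\alpha$, through the nonvanishing of $f$ on the cut plane. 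What your approach buys is a self-contained, conceptually uniform treatment of all four statements; what the paper's citations buy is brevity. Two small points to tidy up: for $\alpha<0$ in i) and ii) one must also assume $g\not\equiv 0$ (otherwise $g^{\alpha}$ is undefined), a caveat the paper likewise omits; and the paper's stated characterization of $\CBF$ also requires the existence of $f(0+)$, which you do address but should verify for the composed expressions (it is immediate since all factors have finite limits at $0+$ except in the degenerate negative-power cases already excluded).
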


Assertion (iv) was discovered by Uchiyama \cite{uch06}, Lemma 2.1,
and since fractional powers $f(x)=x^\alpha$, $0\leq\alpha\leq1$, are
in $\CBF$, (iv) implies (iii). For positive $\alpha>0$, assertions
(i), (ii) are in \cite{nak89} -- his proofs are easily
adapted to $\alpha< 0$ since $f\in\CBF$ if and only if $1/f\in
\St$; see \eqref{eq-cbf-1}. A~unified approach will appear in \cite{ssv}.

Letting $\alpha\to0$ in Theorem \ref{thm-cbf-stable} proves $\lim
_{\alpha\downarrow0} (\tfrac12 f^\alpha+ \tfrac12 g^\alpha
)^{1/\alpha}=\sqrt{fg}$ and since pointwise limits of complete
Bernstein functions are complete Bernstein, we see that $\sqrt{fg}\in
\CBF$ whenever $f,g\in\CBF$. From this, we can easily deduce a new
proof of the so-called log-convexity of the convex cone $\CBF$:
%
\begin{equation}\label{eq:log-convex}
f,g\in\CBF, \alpha\in[0,1]\quad
\implies\quad  f^\alpha\cdot g^{1-\alpha}\in\CBF.
\end{equation}
Alternative proofs can be found in \cite{ber79} and \cite{ssv}.

Indeed, if $\alpha$ is a dyadic number of the form $\alpha=
\sum_{i=1}^n \alpha_i 2^{-i}$ with $\alpha_i\in\{0,1\}$ and
$\alpha_n=1$, then $\alpha'=1-\alpha$ is of the same type with
$\alpha_n'=1$. This is because
\[
\alpha^{\prime}
= \sum_{i=1}^{\infty} 2^{-i} -\sum_{i=1}^{n} \alpha_i 2^{-i}
= \sum_{i=1}^{n-1}(1-\alpha_i)2^{-i} +\sum_{i=n+1}^{\infty}2^{-i}
= \sum_{i=1}^{n-1}\alpha_i^{\prime} 2^{-i} + 2^{-n}
\]
with $\alpha_i^{\prime}=1-\alpha_i$, $i=1,\ldots,n-1$. This means
that
\[
f^\alpha g^{1-\alpha}
= \prod_{i=1}^n \sqrt[2^i]{f^{\alpha_i} g^{\alpha_i'}}
= \sqrt{h_1\sqrt{ h_2 \cdots\sqrt{h_{n-2}\sqrt{ h_{n-1}\sqrt{f_n
g_n }}}}},
\]
where $h_i$ stands for either $f_i$ or $g_i$ if $\alpha_i = 1$ or
$\alpha_i=0$, respectively. Thus, repeated applications of \eqref
{eq:log-convex} with $\alpha=\alpha'=\frac12$ lead to \eqref
{eq:log-convex} for all dyadic $\alpha\in(0,1)$. Since $(0,1)\ni
\alpha
\mapsto f^\alpha$ is continuous, we get \eqref{eq:log-convex} for all
$\alpha\in(0,1)$.

\section{Variograms and their stability properties}\label{wow}

As already emphasized in Section 1, the starting point for this work
is a result in \cite{ma07}, Theorem~3(i), which is reported below
with a short alternative proof.

\begin{theorem}[(\cite{ma07})]\label{ma2007}
Let $\gamma\dvtx \real^d \to\real$ be a homogeneous function. Then
%
\begin{equation}\label{eq:Ma2007}
\bigl( 1- \eee^{-a_1 \gamma(\xi)} \bigr) \bigl( 1- \eee^{-a_2 \gamma(\xi
)} \bigr),
\end{equation}
$a_i > 0$, $i=1,2$, is a variogram if and only if $\gamma(\xi)=|A\xi|$
for the Euclidean norm $|\cdot|$ and a $d\times d$ matrix $A$.
\end{theorem}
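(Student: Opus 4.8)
The two implications require quite different arguments; I would treat them in turn.

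\emph{Sufficiency.} Assume $\gamma(\xi)=|A\xi|$; the case $a_1a_2=0$ being trivial, suppose $a_1,a_2>0$. Expanding the product and using $\eee^{-a_1|A\xi|}\eee^{-a_2|A\xi|}=\eee^{-(a_1+a_2)|A\xi|}$ one gets
\begin{equation*}
\bigl(1-\eee^{-a_1|A\xi|}\bigr)\bigl(1-\eee^{-a_2|A\xi|}\bigr)=D(0)-D(\xi),\qquad D(\xi):=\eee^{-a_1|A\xi|}+\eee^{-a_2|A\xi|}-\eee^{-(a_1+a_2)|A\xi|}.
\end{equation*}
Since $D(0)=1$ and $C(0)-C(\cdot)$ is a variogram for every stationary covariance $C$, it suffices to prove that $D$ is a (stationary) covariance. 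I would deduce this from the complete monotonicity on $]0,\infty[$ of
\begin{equation*}
g(r):=\eee^{-a_1\sqrt r}+\eee^{-a_2\sqrt r}-\eee^{-(a_1+a_2)\sqrt r}.
\end{equation*}
Indeed, each $r\mapsto\eee^{-c\sqrt r}$ lies in $\CM$ by Theorem~\ref{thm:comp} (it is $\eee^{-x}\in\CM$ composed with $c\sqrt r\in\BF$), and from the classical identity $\eee^{-c\sqrt r}=\int_0^\infty\eee^{-rt}\,\tfrac{c}{2\sqrt\pi}\,t^{-3/2}\eee^{-c^2/(4t)}\,dt$ one reads off $g=\LL\sigma$ with $\sigma(dt)=\tfrac{t^{-3/2}}{2\sqrt\pi}\bigl(a_1\eee^{-a_1^2/(4t)}+a_2\eee^{-a_2^2/(4t)}-(a_1+a_2)\eee^{-(a_1+a_2)^2/(4t)}\bigr)\,dt$. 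This measure is nonnegative for the trivial reason that $a_i^2\le(a_1+a_2)^2$ forces $\eee^{-a_i^2/(4t)}\ge\eee^{-(a_1+a_2)^2/(4t)}$, whence $a_1\eee^{-a_1^2/(4t)}+a_2\eee^{-a_2^2/(4t)}\ge(a_1+a_2)\eee^{-(a_1+a_2)^2/(4t)}$. By Bernstein's theorem $g\in\CM$, so $D(\xi)=g(|A\xi|^2)=\int_{[0,\infty[}\eee^{-t|A\xi|^2}\,\sigma(dt)$ is a mixture of Gaussian covariances, hence a covariance, and the ``if'' part follows.

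\emph{Necessity.} Suppose $a_1,a_2>0$ and that $\Gamma(\xi):=\bigl(1-\eee^{-a_1\gamma(\xi)}\bigr)\bigl(1-\eee^{-a_2\gamma(\xi)}\bigr)$ is a variogram. For every $\lambda>0$ the function $\xi\mapsto\lambda^{-2}\Gamma(\lambda\xi)$ is again a variogram (the class is a convex cone stable under dilations), and homogeneity $\gamma(\lambda\xi)=\lambda\gamma(\xi)$ gives
\begin{equation*}
\lambda^{-2}\Gamma(\lambda\xi)=\lambda^{-2}\bigl(1-\eee^{-a_1\lambda\gamma(\xi)}\bigr)\bigl(1-\eee^{-a_2\lambda\gamma(\xi)}\bigr)\xrightarrow[\lambda\downarrow0]{}a_1a_2\,\gamma(\xi)^2
\end{equation*}
pointwise. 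As variograms are closed under pointwise limits, $\gamma^2$ is a variogram; thus it is conditionally negative definite, even, nonnegative, homogeneous of degree $2$, and — taking $\gamma$ measurable, so that $\eee^{-t\gamma^2}$ is a bounded measurable, hence continuous, positive definite function — continuous. Next I would feed $\gamma^2$ into the L\'evy--Khinchine representation $\gamma(\xi)^2=\langle Q\xi,\xi\rangle+\int_{\real^d\setminus\{0\}}\bigl(1-\cos\langle\xi,y\rangle\bigr)\,\nu(dy)$ with $Q\succeq0$ and $\int\min(1,|y|^2)\,\nu(dy)<\infty$; by uniqueness of the representation, degree-$2$ homogeneity forces $\nu(\lambda^{-1}B)=\lambda^2\nu(B)$, and splitting $\real^d\setminus\{0\}$ into dyadic shells shows this scaling contradicts $\int\min(1,|y|^2)\,\nu(dy)<\infty$ unless $\nu\equiv0$. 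Hence $\gamma(\xi)^2=\langle Q\xi,\xi\rangle=|A\xi|^2$ with $A:=Q^{1/2}$. Finally $\gamma\ge0$: the square root of a variogram is subadditive (Schoenberg's Hilbert-space embedding of $(\xi,\eta)\mapsto\gamma(\xi-\eta)$ as a squared distance), so $\Gamma(n\xi)\le n^2\Gamma(\xi)$ for all $n\in\nat$, which is impossible if $\gamma(\xi_0)<0$ for some $\xi_0$, because then $\Gamma(n\xi_0)$ grows exponentially in $n$. Therefore $\gamma(\xi)=|A\xi|$.

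The work is concentrated in two places. In the ``if'' part one must keep in mind that the Schur product of two variograms need not be a variogram, so the argument has to exploit the semigroup identity $\eee^{-a_1|A\xi|}\cdot\eee^{-a_2|A\xi|}=\eee^{-(a_1+a_2)|A\xi|}$; after recasting the claim as complete monotonicity of $g$, everything collapses to the one-line inequality $\eee^{-a_i^2/(4t)}\ge\eee^{-(a_1+a_2)^2/(4t)}$, and spotting this reduction is the main point. In the ``only if'' part the crux is the rescaling limit, which extracts $\gamma^2$ as a variogram, together with the fact — proved via L\'evy--Khinchine plus the shell decomposition — that a continuous conditionally negative definite function homogeneous of degree $2$ is a nonnegative quadratic form; the only soft point is the mild regularity of $\gamma$ needed to enter that framework.
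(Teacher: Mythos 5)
Your proof is correct, and both halves take a route genuinely different from the paper's. For sufficiency the paper simply invokes its Theorem \ref{thm-ma-extended} with $g_i(x)=1-\eee^{-a_ix}$ and $\alpha=\beta=\tfrac12$ (i.e.\ it shows that $x\mapsto g_1(\sqrt x)\,g_2(\sqrt x)$ has a completely monotone derivative, hence is a Bernstein function); you instead rewrite the product as $D(0)-D(\xi)$ and verify by hand, via the subordination identity for $\eee^{-c\sqrt r}$ and the elementary inequality $a_i^2\le (a_1+a_2)^2$, that $D$ is a mixture of Gaussian covariances. Your computation is self-contained and exhibits the spectral measure explicitly, but it does not generalize to other pairs of Bernstein functions the way Theorem \ref{thm-ma-extended} does. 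For necessity the paper divides by $a_1a_2$ and lets $a_1,a_2\to 0$, so it needs the hypothesis for \emph{all} $a_1,a_2>0$, whereas you rescale the spatial variable and use homogeneity, which requires the hypothesis only for one fixed pair $a_1,a_2>0$ --- a slightly stronger statement. To kill the jump part of the L\'evy--Khinchine representation the paper takes the limit $\gamma^2(n\xi)/n^2\to Q\xi\cdot\xi$ directly, while you derive the scaling law $\nu(\lambda^{-1}B)=\lambda^2\nu(B)$ from uniqueness and rule it out by the dyadic shell decomposition; both arguments are sound. You also settle the sign of $\gamma$ (excluding $\gamma(\xi_0)<0$ by pitting the subadditivity bound $\Gamma(n\xi_0)\le n^2\Gamma(\xi_0)$ against exponential growth), a point the paper passes over in silence even though $\gamma^2=|A\xi|^2$ alone only yields $|\gamma(\xi)|=|A\xi|$. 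The one soft spot you flag yourself --- the measurability/continuity of $\gamma^2$ needed to enter the L\'evy--Khinchine framework --- is glossed over in the paper as well, so it is not a gap relative to the paper's own standard.
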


It is natural to ask whether Ma's theorem works only for the
exponential class of variograms or whether it can be generalized.
The subsequent result gives an answer to this problem, supplying a
wide class of variograms closed under products.

Here and hereafter, we will use a famous result of Schoenberg and
Bochner; see \cite{schoe38} (in the context of covariance functions and
complete monotonicity) and  \cite{bochner}, page 99 (in the
context of variograms and Bernstein functions). We restate Bochner's
version in the setting of the current paper. Alternative proofs can be
found in the Appendix of Jacob and Schilling \cite{jacsch05} and
Steerneman and van-Perlo-ten Kleij \cite{SP}.

\begin{lemma}\label{lem-scho}
All variograms $\gamma$ which are rotationally invariant and
permissible in all (or at least infinitely many) dimensions
$d=1,2,\ldots$ are of the form $\gamma(\xi)=f(|\xi|^2)$ with a
Bernstein function $f\in\BF$.
\end{lemma}

The next result is not only a generalization of Ma's result, but also
the key to a simple proof of Theorem \ref{ma2007}.
\begin{theorem}\label{thm-ma-extended-1}
Let $g_1,g_2$ be Bernstein functions and $0 \leq\alpha_1, \alpha_2$
such that $\alpha_1 + \alpha_2 \leq1$. Then
$g_1(x^{\alpha_1})g_2(x^{\alpha_2})$ is a Bernstein function.
\end{theorem}

\begin{pf}
%
Set $h_{\alpha,\beta}(x) := g_1(x^\alpha) \cdot g_2(x^\beta)$,
$x>0$. It is enough to show that $h_{\alpha,\beta}'\in\CM$.
%
Clearly,
\[
h'_{\alpha,\beta}(x)= x^{\alpha+\beta-1} \biggl(\alpha g_1'(x^{\alpha})
\frac
{g_2(x^{\beta})}{x^{\beta}}
+ \beta g_2'(x^{\beta}) \frac{g_1(x^{\alpha})}{x^{\alpha}} \biggr).
\]
Since $g_i \in\BF$, we have that $g_i^{\prime} \in\CM$ and
$x^{-1}g_i(x) \in\CM$. This will also be the case for the compositions
$g_1'(x^{\alpha})$ and $g_2'(x^{\beta})$,
$g_1(x^{\alpha})/x^{\alpha}$ and $g_2(x^{\beta})/x^{\beta}$, by a
straightforward application of Theorem \ref{thm:comp}. Moreover, for
$\alpha+\beta\leq1$, $x\mapsto x^{\alpha+\beta-1}$ is completely
monotone. The proof is completed since completely monotone functions
form a convex cone which is closed under products.
\end{pf}

\begin{corollary}\label{thm-ma-extended}
Let $\real^d \ni\xi\mapsto\gamma_i(\xi) = g_i(|\xi|^2)$ be
rotationally invariant variograms for all $d \in\mathbb{N}$,
$i=1,2$. Let $\alpha, \beta\in[0,1]$ be such that $\alpha+ \beta
\leq1$ and let $A$ be a $d\times d$ matrix. Then
\[
f_{\alpha,\beta}(\xi):= g_1(|A \xi|^{2\alpha}) g_2(|A \xi
|^{2\beta})
\]
is still a variogram on $\real^d$ for all $d\in\mathbb N$.
\end{corollary}

\begin{remark}
The result of Theorem \ref{thm-ma-extended-1} extends immediately to
the product of $n$ Bernstein functions: for $\sum_{i=1}^n \alpha_i
\leq
1$, $\alpha_i\geq0$ and $g_i\in\BF$, the
function $h(x) := \prod_{i=1}^n g_i(x^{\alpha_i})$ is again in $\BF$.
This generalizes the case where $\alpha_i = \frac1n$, $g_i=g$,
$i=1,\ldots,n$, leading to $h(x) = (g(x^{1/n}))^n$, which is due to
\cite{BBL}.
\end{remark}

The proof of the result above offers a considerably easier way to
show Ma's result.

\begin{pf*}{Proof of Theorem \ref{ma2007}}
If $\gamma(\xi)=|A\xi|$, Corollary \ref{thm-ma-extended} with
$g_i(x)=1-\exp(-a_i x)$, $i=1,2$ and $\alpha=\beta=\frac12$ shows that
\eqref{eq:Ma2007} is a variogram.

Now, assume that \eqref{eq:Ma2007} is a variogram. Then
\[
\xi\mapsto\frac{ ( 1- \eee^{\it-a_1 \gamma(\xi)} )}{a_1}
\cdot\frac{ ( 1- \eee^{\it-a_2 \gamma(\xi)} )}{a_2}
\]
is a variogram for all $a_1, a_2>0$ and so is its pointwise limit
$\gamma^2(\xi)$ as $a_1,a_2\to0$; thus, $\gamma^2(\xi)$ is a
real-valued variogram. As such, it has a L\'evy--Khinchine representation
\[
\gamma^2(\xi)
= Q\xi\cdot\xi+ \int_{x\neq0} \bigl(1-\cos
(x\cdot\xi)\bigr) \nu(\mathrm{d}x),
\]
where $Q\in\real^{d\times d}$ is positive semi-definite and $\nu$ is a
measure with $\int_{x\neq0} |x|^2\wedge1 \nu(\mathrm{d}x) < \infty$. Since
$\gamma(\xi)$ is homogeneous, we get
\[
\gamma^2(\xi) = \frac{\gamma^2(n\xi)}{n^2} \stackrel{n\to
\infty}{\longrightarrow} Q\xi
\cdot\xi= \bigl|\sqrt Q \xi\bigr|^2
\]
for the uniquely determined, positive semidefinite square root $A=\sqrt
Q$ of $Q$.
\end{pf*}

Several examples of Bernstein functions may be found in
\cite{berfor,ber08} or in \cite{jacsch05}; an extensive
list will be included in the monograph \cite{ssv}. Three celebrated
classes of Bernstein functions are well known in the spatial
statistics literature:
\begin{enumerate}[(3)]
\item[(1)] the \textit{Mat\'{e}rn} class \cite{matern1986}
$ f_{\alpha,\nu} = 1- 2^{1-\nu}/\Gamma(\nu) (\alpha\sqrt
x)^\nu K_\nu(\alpha\sqrt x) $, $x>0$, for $\alpha,
\nu
> 0$ and where $K_{\nu}$ is the modified Bessel function
of the second kind of order $\nu$;

\item[(2)] the \textit{Cauchy} class \cite{gneischlat2004}
$ f_{\alpha,\beta}(x):= 1- (1+x^{\alpha})^{-\beta} $, $x>0$, where $0
< \alpha\leq1$ and $0 < \beta$;

\item[(3)] the \textit{Dagum} class \cite{porcu}
$ f_{\rho,\gamma}(x):= ( \frac{x^{\rho}}{1+x^{\rho}} )^{\gamma}$,
$x>0$, where $\rho,\gamma\in(0,1)$.
\end{enumerate}

Let us mention a few more stability properties that make some
classes of functions appealing for their use in spatial statistics.
We again work within the framework of rotationally invariant
functions.

\begin{prop}
Let $\gamma\dvtx \real^d \to\real$ be rotationally invariant for all
dimensions $d=1,2,\ldots$ such that $\gamma(\xi)=g(|\xi|^2)$ for some
$g \in
\CBF$. Then:
\begin{longlist}[(ii)]
\item[(i)]
$ \real^d \ni\xi\mapsto\frac{|\xi|^2}{g(|\xi|^2)}$ is a
rotationally invariant variogram which is permissible for every $d \in
\mathbb N$;

\item[(ii)]$ \real^d \ni\xi\mapsto\frac1{g (1/{|\xi|^2} )}$ and
$\xi\mapsto|\xi|^2 g (\frac1{|\xi|^2} )$ are rotationally invariant
variograms which are permissible for every $d \in\mathbb N$.
\end{longlist}
\end{prop}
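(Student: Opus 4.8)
The plan is to reduce all three assertions, via Schoenberg's theorem, to membership statements for functions on $]0,\infty[$ and then read them off from the $\CBF$/$\St$ calculus collected in Section~2. Recall that a radially symmetric function on $\real^d$ which is a variogram for \emph{every} dimension $d$ is precisely of the form $f(|\cdot|^2)$ with $f\in\BF$; hence it suffices to show that
$$
    h_1(x):=\frac{x}{g(x)},\qquad h_2(x):=\frac{1}{g(1/x)},\qquad h_3(x):=x\,g(1/x)\qquad(x>0)
$$
all belong to $\BF$, the asserted variograms then being $\xi\mapsto h_k(|\xi|^2)$. Throughout I would assume $g\not\equiv 0$ (otherwise $\gamma\equiv 0$ and there is nothing to prove) and record at the outset that a non-trivial complete Bernstein function is strictly positive on $]0,\infty[$, so that these quotients make sense, the value at $\xi=0$ being read as the limit $f(0+)$.

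Part i) requires essentially no computation: one of the equivalent descriptions of $\CBF$ recalled in Section~2 states that $g\in\CBF\iff[x\mapsto x/g(x)]\in\CBF$, so $h_1\in\CBF\subset\BF$, and Schoenberg's theorem converts this into the claim about $\xi\mapsto|\xi|^2/\gamma(\xi)$.

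For part ii) the single ingredient needed is that the inversion $\psi(x):=1/x$ is itself a Stieltjes function, since $\psi(x)=\int_{[0,\infty[}(x+t)^{-1}\,\delta_0(dt)$ and $\int_{[0,\infty[}(1+t)^{-1}\,\delta_0(dt)=1<\infty$. Because $g\in\CBF$ with $g\not\equiv 0$, Section~2 furnishes that $q_1:=1/g$ and $q_2:=[x\mapsto g(x)/x]$ are both Stieltjes functions; composing each of them with $\psi$ and invoking the rule that the composition of two Stieltjes functions is a complete Bernstein function, I obtain $h_2=q_1\circ\psi\in\CBF$ and $h_3=q_2\circ\psi\in\CBF$, since $q_1(\psi(x))=1/g(1/x)=h_2(x)$ and $q_2(\psi(x))=g(1/x)\cdot x=h_3(x)$. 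Thus $h_2,h_3\in\BF$, and a last application of Schoenberg's theorem shows that $\xi\mapsto 1/g(1/|\xi|^2)$ and $\xi\mapsto|\xi|^2g(1/|\xi|^2)$ are radially symmetric variograms permissible in every dimension. (One could also economise: once $h_3\in\CBF$ is in hand, $h_2$ follows for free, because $x/h_3(x)=1/g(1/x)=h_2(x)$ and $\CBF$ is stable under $f\mapsto[x\mapsto x/f(x)]$.)

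I do not expect a genuine obstacle: the whole argument is bookkeeping once the right objects have been named. The only points that call for a little care are the identification $\psi\in\St$, getting the order of composition right, disposing of the degenerate case $g\equiv 0$, and interpreting the boundary value at $\xi=0$.
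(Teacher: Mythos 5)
Your proof is correct and follows essentially the same route as the paper: both reduce everything via Schoenberg's theorem to the $\CBF$/$\St$ calculus, proving i) from the equivalence $g\in\CBF\iff x/g(x)\in\CBF$ and ii) by writing $1/g(1/x)$ as a composition of Stieltjes functions with the inversion $x\mapsto 1/x$. The only cosmetic difference is that for $|\xi|^2 g(1/|\xi|^2)$ the paper applies part i) to the first function of part ii), whereas you compose $g(x)/x\in\St$ with the inversion directly --- your own parenthetical remark recovers the paper's shortcut.
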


\begin{pf}
Part (i) is a simple application of the first equivalence in
\eqref{eq-cbf-1} which states that $g \in\CBF$ if and only if
$g(x)/x$ is a Stieltjes function.

Part (ii) follows immediately by noting that, for $g \in\CBF
$, the function $x \mapsto1/ g(1/x)$ is a composition of the type
$\sigma\circ g \circ\sigma(x)$, where $\sigma$ is the Stieltjes
function $x \mapsto\frac1x$. Since the composition $\sigma\circ g$
is a Stieltjes function and since the composition of two Stieltjes
functions is in $\CBF$, we have the first assertion of part
(ii). If we apply part (i) to this variogram, the second
assertion follows.
\end{pf}

For further (stability) properties of the class $\CBF$, the reader is
referred to \cite{ssv}; some examples of complete Bernstein functions
are given below.

Another interesting problem arises when quasi-arithmetic operators,
in the sense of Hardy, Littlewood and P\'olya \cite{harlitpol},
are applied to variograms. This means that we seek conditions which
preserve the permissibility of the underlying structure. This has
been considered in \cite{pormatchr07} for quasi-arithmetic
composition of covariance functions. We believe that the same
question in connection with variograms is even more challenging from
the mathematical point of view and is equally important as far as
statistics are concerned.

Recall that a \textit{power mean} is a mapping of the form $(u,v)
\mapsto\psi_{\alpha}(u,v):= ( u^{\alpha}+v^{\alpha}
)^{1/\alpha}$ for $(u,v) \in\real^2$ and $\alpha\in\real
\setminus\{0\}$.
\begin{prop}\label{rene1}
Let $\gamma_i\dvtx\real^d \to\real$, $i=1,2$, be rotationally invariant
variograms for all dimensions $d \in\mathbb N$. We write $g_i$ for the
radial function such that $\gamma_i(\xi)=g_i(|\xi|^2)$:
\begin{longlist}[(iii)]
\item[(i)] If $g_1,g_2\in\CBF$, then
$ \xi\mapsto(\gamma_1^\alpha(\xi) + \gamma_2^\alpha(\xi)
)^{1/\alpha
}$ is a variogram for all $\alpha\in[-1,1] \setminus\{0\}$.
\item[(ii)] If $g_1,g_2\in\CBF$, then
$ \xi\mapsto(g_1(|\xi|^{2\alpha}) + g_2(|\xi|^{2\alpha} )
)^{1/\alpha}$ is a variogram for all $\alpha\in[-1,1] \setminus\{0\}$.
\item[(iii)]
$ \xi\mapsto g_1(|\xi|^{2\alpha}) g_2(|\xi|^{2-2\alpha})$ is a
variogram for all $0 < \alpha< 1$.
\end{longlist}
\end{prop}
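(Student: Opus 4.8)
The plan is to reduce each of the three assertions to the stability properties of complete Bernstein functions collected in Theorem~\ref{thm-cbf-stable} — and, for part~(iii), to Theorem~\ref{thm-ma-extended} itself. The one structural fact that does all the work is Schoenberg's theorem (used already in the proof of Theorem~\ref{thm-ma-extended}): a radially symmetric function $\real^d\ni\xi\mapsto h(|\xi|^2)$ is a variogram simultaneously for every $d\in\mathbb N$ as soon as $h\in\BF$. Since $\CBF\subset\BF$ — which is immediate from the integral representation \eqref{eq:bernsteinfc} — it is enough, in each case, to recognise the radial profile of the candidate function and to verify that it is a complete Bernstein function (parts (i), (ii)) or a Bernstein function (part (iii)). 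Recall that $g_1,g_2\in\BF$ by hypothesis, and in (i), (ii) they are even in $\CBF$.

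For part~(i) I would put $h_\alpha(x):=\big(g_1^\alpha(x)+g_2^\alpha(x)\big)^{1/\alpha}$, $x>0$, and apply Theorem~\ref{thm-cbf-stable}(i) with $f=g_1$, $g=g_2$ to get $h_\alpha\in\CBF\subset\BF$ for every $\alpha\in[-1,1]\setminus\{0\}$; substituting $x=|\xi|^2$ gives $h_\alpha(|\xi|^2)=\big(\gamma_1^\alpha(\xi)+\gamma_2^\alpha(\xi)\big)^{1/\alpha}$, which is therefore a variogram in all dimensions. Part~(ii) runs along exactly the same lines with $h_\alpha(x):=\big(g_1(x^\alpha)+g_2(x^\alpha)\big)^{1/\alpha}$: Theorem~\ref{thm-cbf-stable}(ii) gives $h_\alpha\in\CBF$, and $h_\alpha(|\xi|^2)=\big(g_1(|\xi|^{2\alpha})+g_2(|\xi|^{2\alpha})\big)^{1/\alpha}$. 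For part~(iii) the shortest route is to specialise Theorem~\ref{thm-ma-extended} to $A=I$ and to the pair of exponents $(\alpha,1-\alpha)$, which satisfies $\alpha+(1-\alpha)=1\le 1$; this yields at once that $\xi\mapsto g_1(|\xi|^{2\alpha})g_2(|\xi|^{2-2\alpha})$ is a variogram for all $d\in\mathbb N$ and all $0<\alpha<1$. (If one prefers to stay inside $\CBF$, Theorem~\ref{thm-cbf-stable}(iii) gives $x\mapsto g_1(x^\alpha)g_2(x^{1-\alpha})\in\CBF$ directly.)

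Since the argument is essentially a repackaging, I do not anticipate a real obstacle; the only points requiring a word of care are bookkeeping. For the negative exponents $\alpha\in[-1,0)$ in (i) and (ii) the symbols $g_i^\alpha$ and $g_i(x^\alpha)$ must be meaningful, so one should assume $\gamma_1,\gamma_2\not\equiv 0$, i.e.\ $g_1,g_2\not\equiv 0$; a nonzero Bernstein function is strictly positive on $]0,\infty[$, so this is the only genuinely degenerate situation, and the possible blow-up of $g_i^\alpha$ at $0+$ is already built into the statement of Theorem~\ref{thm-cbf-stable}. The remaining step — passing from the profile $h$ on $]0,\infty[$ to the function $\xi\mapsto h(|\xi|^2)$ on $\real^d$ — is the routine substitution already used repeatedly above.
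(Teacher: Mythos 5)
Your proposal is correct and follows essentially the same route as the paper: parts i) and ii) are read off from Theorem \ref{thm-cbf-stable} i) and ii) applied to the radial profiles, and part iii) is obtained by specialising Theorem \ref{thm-ma-extended} to the exponent pair $(\alpha,1-\alpha)$, with Schoenberg's correspondence $h\in\BF\Leftrightarrow h(|\xi|^2)$ a variogram in all dimensions doing the transfer. Your extra remarks on the negative-$\alpha$ bookkeeping and the nondegeneracy of $g_1,g_2$ are sensible refinements of the paper's terser argument but do not change the substance.
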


\begin{pf}
Since, by Lemma \ref{lem-scho}, $g_i\in\BF$, assertion (iii) is a
simple consequence of Corollary \ref{thm-ma-extended}. We should
mention at this point that for $g_1, g_2\in\CBF$, the resulting
rotationally invariant variogram would again be of the form $h(|\xi
|^2)$ with $h\in\CBF$; see Theorem \ref{thm-cbf-stable}(iv).
Both (i) and (ii) follow immediately from \ref{thm-cbf-stable}(i) and
(ii), respectively.
\end{pf}

Finally, we combine two aspects treated separately until now. Given
two or three isotropic variograms, we seek permissibility conditions
for the products of special compositions. The proposition below
results from a simple application of Theorem \ref{thm-cbf-stable}(iv)
with $h(s) = s$, $f = g_1$, $g = g_2$, respectively, $h =
g_3$, $f = g_1$, $g = g_2$.

\begin{prop}
Let $\real^d \ni\xi\mapsto\gamma_i(\xi)$, $i=1,2,3$, be
rotationally invariant and isotropic variograms for all $d \in\mathbb
{N}$ and assume that $\gamma_i(\xi)=g_i(|\xi|^2)$, where $g_i\in
\CBF$. Then
\[
\xi\mapsto\gamma_1(\xi) \gamma_2 \biggl( \frac{\xi}{\sqrt{\gamma
_1(\xi)}} \biggr)
\quad \mbox{and}\quad
\gamma_3 \bigl( \sqrt{\gamma_1(\xi)} \bigr) \gamma_2 \biggl(\frac{\xi}{\sqrt
{\gamma
_1(\xi)}} \biggr)
\]
are still permissible for all $d \in\mathbb N$ and of the form $h(|\xi
|^2)$ with some $h\in\CBF$.
\end{prop}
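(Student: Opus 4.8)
The plan is to reduce both claims to Theorem~\ref{thm-cbf-stable}~iv) by unwinding the compositions. Since each $\gamma_i$ is radially symmetric and permissible in every dimension, Schoenberg's theorem (as quoted in the proof of Theorem~\ref{thm-ma-extended}) forces $\gamma_i(\xi)=g_i(|\xi|^2)$ with $g_i\in\BF$, and here we are in fact handed the stronger hypothesis $g_i\in\CBF$. I would tacitly assume $g_1\not\equiv 0$ — otherwise the rescaling $\xi\mapsto\xi/\sqrt{\gamma_1(\xi)}$ is meaningless — so that $g_1$, being a non-trivial Bernstein function with an analytic extension, is strictly positive on $]0,\infty[$; the expressions below are then well defined for $\xi\neq 0$, with value $0$ at the origin by continuity. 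Throughout write $x=|\xi|^2$.

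First I would compute the squared Euclidean norm of the rescaled argument, $\bigl|\xi/\sqrt{\gamma_1(\xi)}\bigr|^2=|\xi|^2/\gamma_1(\xi)=x/g_1(x)$, so that
$$\gamma_1(\xi)\,\gamma_2\!\left(\frac{\xi}{\sqrt{\gamma_1(\xi)}}\right)=g_1(x)\cdot g_2\!\left(\frac{x}{g_1(x)}\right)=:h_1(x).$$
This is exactly the function in Theorem~\ref{thm-cbf-stable}~iv) with the roles $h=\mathrm{id}$, $f=g_1$, $g=g_2$; since the identity map $x\mapsto x$ belongs to $\CBF$, that theorem gives $h_1\in\CBF\subset\BF$, and hence $\xi\mapsto h_1(|\xi|^2)$ is, by Schoenberg's theorem again, a radially symmetric variogram permissible in every $d\in\nat$, of the asserted form $h(|\xi|^2)$ with $h=h_1\in\CBF$.

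For the second expression, note that $\sqrt{\gamma_1(\xi)}$ is a scalar, so $\gamma_3\bigl(\sqrt{\gamma_1(\xi)}\bigr)=g_3\bigl((\sqrt{\gamma_1(\xi)})^2\bigr)=g_3(g_1(x))$; combining with the computation above,
$$\gamma_3\!\left(\sqrt{\gamma_1(\xi)}\right)\gamma_2\!\left(\frac{\xi}{\sqrt{\gamma_1(\xi)}}\right)=g_3(g_1(x))\cdot g_2\!\left(\frac{x}{g_1(x)}\right)=:h_2(x),$$
which is Theorem~\ref{thm-cbf-stable}~iv) verbatim with $h=g_3$, $f=g_1$, $g=g_2$, whence $h_2\in\CBF$; one concludes as before.

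There is no deep obstacle here: the whole analytic weight is carried by Theorem~\ref{thm-cbf-stable}~iv) (Uchiyama's lemma). The only points needing a word of care are the harmless well-definedness issue above — strict positivity of $g_1$ on $]0,\infty[$ together with continuity at $\xi=0$ — and the observation that $\mathrm{id}\in\CBF$, which is what lets the first product be read off as the degenerate case $h=\mathrm{id}$ of the second.
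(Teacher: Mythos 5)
Your proof is correct and follows exactly the route the paper intends: the paper offers no written proof beyond the remark that the proposition ``results from a simple application of Theorem~\ref{thm-cbf-stable}'', and your unwinding of the compositions into $g_1(x)\,g_2(x/g_1(x))$ and $g_3(g_1(x))\,g_2(x/g_1(x))$ with $x=|\xi|^2$ is precisely the intended application of part~iv), with $h=\mathrm{id}$ and $h=g_3$ respectively. Your added care about $g_1\not\equiv 0$ and $\mathrm{id}\in\CBF$ is a harmless (and welcome) supplement to what the paper leaves implicit.
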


We conclude this section by presenting another curious way to construct
continuous variograms and, more generally, complex-valued conditionally
positive definite functions, with the help of Bernstein functions. The
interesting fact in the example below is the product structure, which
is quite unusual for conditionally positive definite functions.

\begin{prop}
Let $f$ be a Bernstein function such that the representing measure $\nu
$ in the L\'evy--Khinchine formula \eqref{eq:bernsteinfc} has a
monotone decreasing density $m$, that is,\ $f(x)=\alpha+\beta x+\int
_{(0,\infty)} (1-\eee^{-xt}) m(t) \,\mathrm{d}t$.

Then $\xi\mapsto \mathrm{i}\xi f(\mathrm{i}\xi)$ is conditionally positive definite and
$\xi\mapsto-\Re(\mathrm{i}\xi f(\mathrm{i}\xi))$ is a continuous variogram.
\end{prop}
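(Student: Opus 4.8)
The plan is to linearize the unusual product $-i\xi f(i\xi)$. Writing $f(x)=\alpha+\beta x+g(x)$ with $g(x)=\int_{]0,\infty[}(1-\eee^{-xt})m(t)\,dt$ — and reading $f(i\xi)$, $\xi\in\real$, through this integral, which converges absolutely on $\{\Re z\ge0\}$, so that no analytic continuation of $f$ is needed — I would aim at the representation
$$-i\xi f(i\xi)=-i\alpha\xi+\beta\xi^2+\int_{(0,\infty)}(1-\eee^{-i\xi s}-i\xi s)\,\rho(ds),$$
where $\rho$ is a positive measure on $]0,\infty[$ built only from $m$. Each summand on the right is conditionally negative definite in $\xi$ (checked below), so both assertions of the proposition follow once this representation is established.

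To build $\rho$ I would use that $m$ is decreasing: since $\int_0^\infty\frac{t}{1+t}m(t)\,dt<\infty$ forces $m(t)\to0$, we may write $m(t)=\rho((t,\infty))$ for the positive measure $\rho:=-dm$ on $]0,\infty[$. Inserting this into $g$ and interchanging the two integrations (Fubini) gives $g(z)=\int_{(0,\infty)}(s-\frac{1-\eee^{-zs}}{z})\,\rho(ds)$ for $\Re z\ge0$; multiplying by $-i\xi$ and reattaching the constant $\alpha$ and the drift $\beta x$ yields the displayed representation. The one point demanding care is integrability: the same disintegration applied to the L\'evy condition gives $\int_0^\infty\frac{t}{1+t}m(t)\,dt=\int_{(0,\infty)}(s-\log(1+s))\,\rho(ds)$, and since $s-\log(1+s)$ is of order $s^2$ near $0$ and of order $s$ near $\infty$, this forces $\int_{(0,1)}s^2\,\rho(ds)<\infty$ and $\int_{(1,\infty)}s\,\rho(ds)<\infty$. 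Together with $|1-\eee^{-i\xi s}-i\xi s|\le\min\{\frac12\xi^2s^2,\,2+|\xi|s\}$ this makes all the integrals above absolutely convergent for each fixed $\xi$, continuous in $\xi$ by dominated convergence, and justifies the Fubini steps.

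It then remains to observe that each term of the representation is continuous and conditionally negative definite on $\real$: for $\sum_k a_k=0$ the purely imaginary linear terms $-i\alpha\xi$ and $-i\xi s$ contribute $0$ to $\sum_{j,k}(\cdot)\,a_j\bar a_k$, the quadratic form $\beta\xi^2$ contributes $-2\beta|\sum_k\xi_k a_k|^2\le0$, and $\sum_{j,k}(1-\eee^{-i(\xi_j-\xi_k)s})a_j\bar a_k=-|\sum_k a_k\eee^{-i\xi_k s}|^2\le0$. Pulling $\int_{(0,\infty)}\cdots\rho(ds)$ through the finite sum (absolute convergence) shows $-i\xi f(i\xi)$ is conditionally negative definite. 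Taking real parts in the representation annihilates the imaginary linear term and leaves $-\Re(i\xi f(i\xi))=\beta\xi^2+\int_{(0,\infty)}(1-\cos(\xi s))\,\rho(ds)$, which is continuous, even, non-negative, vanishes at $\xi=0$ and is conditionally negative definite — hence a variogram by Theorem~\ref{cndf}.

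I do not expect a serious obstacle. The single non-routine step is the one in the second paragraph: monotonicity of $m$ is exactly the positivity of the measure $\rho=-dm$, and it is this positivity that permits carrying the factor $-i\xi$ inside the integral and collapses the product $-i\xi f(i\xi)$ into a L\'evy-Khinchine superposition; everything else — the Fubini interchanges, the growth estimates for $s-\log(1+s)$ — is routine bookkeeping. Without monotonicity of $m$ the scheme breaks down, and indeed $-i\xi f(i\xi)$ need not be conditionally negative definite for a general Bernstein function $f$.
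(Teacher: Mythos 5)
Your proposal is correct and follows essentially the same route as the paper: both exploit monotonicity of $m$ to write $m(t)=\rho((t,\infty))$ for a positive measure $\rho$, interchange the integrals by Fubini, and collapse $-i\xi f(i\xi)$ into the L\'evy--Khinchine form $-i\alpha\xi+\beta\xi^2+\int_{]0,\infty[}(1-\eee^{-i\xi s}-i\xi s)\,\rho(ds)$, from which conditional negative definiteness and the variogram property of the real part follow. Your treatment is in fact slightly more complete than the paper's, since you verify the integrability of $\rho$ near $0$ and $\infty$ and check the conditional negative definiteness of each summand explicitly rather than citing it.
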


\begin{pf}
By the monotonicity of $m$, we see that $m(t) = \nu[t,\infty)$ for a
(L\'evy) measure $\nu$, that is, a~measure $\nu$ on $(0,\infty)$
satisfying $\int_{(0,\infty)} t(1+t)^{-1} \nu(\mathrm{d}t)$. The integration
properties of $\nu$ become clear from the calculation below since we
have only used Fubini's theorem for positive integrands to swap
integrals. For $x\geq0$, we get
\begin{eqnarray*}
xf(x)
&=& \alpha x + \beta x^2 + \int_0^\infty x(1-\eee^{-xt})\int_t^\infty
\nu(\mathrm{d}s) \,\mathrm{d}t\\
&=& \alpha x + \beta x^2 + \int_0^\infty\int_t^\infty x(1-\eee^{-xt})
\nu(\mathrm{d}s) \,\mathrm{d}t\\
&=& \alpha x + \beta x^2 + \int_0^\infty\int_0^s x(1-\eee^{-xt}) \,\mathrm{d}t
\nu
(\mathrm{d}s)\\
&=& \alpha x + \beta x^2 + \int_0^\infty[\eee^{-xs} -1+ sx ] \nu(\mathrm{d}s),
\end{eqnarray*}
which, as by-product, shows that $\int_0^\infty s^2\wedge s \nu(\mathrm{d}s) <
\infty$. We may, therefore, plug in $z=\mathrm{i}\xi$ and get
\[
\mathrm{i}\xi f(\mathrm{i}\xi)
= - \biggl(-\mathrm{i}\alpha\xi+ \beta\xi^2 + \int_0^\infty[1-\eee^{-\mathrm{i}s\xi} -
\mathrm{i}s\xi] \nu(\mathrm{d}s) \biggr).
\]
Thus, $-\gamma(\xi) := \mathrm{i}\xi f(\mathrm{i}\xi)$ is conditionally positive definite
and $\Re\gamma(\xi)$ is a variogram.
\end{pf}

\section{Kernels and variograms of the Schoenberg--L\'{e}vy type}\label{sec4}

This section explores some results that may be obtained when working
with kernels of the Schoenberg--L\'{e}vy type. These kernels are
extensively used in the literature and we refer to Ma \cite{ma07}
and the references therein. For $\xi_1,\xi_2\in\real^d$, these
are non-stationary covariance functions obtained from a non-negative
function $g\dvtx[0,\infty) \to[0,\infty)$ such that $g(0)=0$ through
the linear combination
\[
g(|\xi_1|)+g(|\xi_2|) - g(|\xi_1-\xi_2|).
\]
A celebrated example is the fractional Brownian sheet \cite{ayache}
with $g(\xi)=|\xi|^{\alpha}$, $\alpha\in(0,2]$. Ma \cite{ma07}
shows that for a fixed $\xi_0\in\real^d$, the function
\[
C_{\xi_0}(\xi):= g(|\xi+\xi_0|)+g(|\xi-\xi_0|)-2 g (|\xi|)
\]
is a covariance function, provided that $g(|\xi|)$ is a variogram.
Indeed, we are going to show that this is a simple consequence of
the following, more general, result.
\begin{lemma}\label{lemma-schoenberg-kernel}
Let $\gamma\dvtx \real^d \to\real$ be a continuous variogram and let
$\xi
, \eta\in\real^d$, $d \in\mathbb{N}$. Then
\[
\phi_\eta(\xi) := \gamma(\xi+\eta)+\gamma(\xi-\eta)-2\gamma
(\xi)
\]
is a continuous covariance function as a function of $\xi$. Moreover, if
\[
\gamma_\eta(\xi) := 2\gamma(\eta)+ 2\gamma(\xi) - \gamma(\xi
+\eta
)-\gamma(\xi-\eta),
\]
then $\xi\mapsto\gamma_\eta(\xi)$ is a continuous variogram.
\end{lemma}

Note that in Lemma \ref{lemma-schoenberg-kernel}, we have $\gamma
_\eta
(\xi)=\gamma_\xi(\eta)$, that is, $\eta\mapsto\gamma_\eta(\xi
)$ is also
a continuous variogram.

\begin{pf*}{Proof of Lemma \ref{lemma-schoenberg-kernel}}
Recall the following elementary formula for the cosine:
$
\cos(a+b)+\cos(a-b) = 2\cos a \cos b.
$
Since $\gamma(\xi)$ has the L\'evy--Khinchine representation
\[
\gamma(\xi) = Q\xi\cdot\xi+ \int_{x\neq0} (1-\cos x\cdot\xi)
\nu(\mathrm{d}x),
\]
we find that
\begin{eqnarray*}
\phi_\eta(\xi)
&=& Q(\xi+\eta)\cdot(\xi+\eta) + Q(\xi-\eta)\cdot(\xi-\eta) -
2Q\xi\cdot
\xi\\
&&{} + \int_{x\neq0} \bigl(2\cos x\cdot\xi- \cos
x\cdot
(\xi+\eta) -\cos x\cdot(\xi-\eta) \bigr) \nu(\mathrm{d}x)\\
&=& 2Q\eta\cdot\eta+ \int_{x\neq0} (2\cos x\cdot\xi- 2\cos x\cdot
\xi
\cos x\cdot\eta) \nu(\mathrm{d}x)\\
&= &2Q\eta\cdot\eta+ 2\int_{x\neq0} (1 - \cos x\cdot\eta) \cos
x\cdot
\xi\nu(\mathrm{d}x).
\end{eqnarray*}
This shows that $\xi\mapsto\phi_\eta(\xi)$ is symmetric and positive
definite, hence a covariance function. Now, consider
\begin{eqnarray*}
\gamma_\eta(\xi)
&=& 2\gamma(\eta)-\phi_\eta(\xi)\\
&=& 2a + 2Q\eta\cdot\eta+ 2\int_{x\neq0} (1 - \cos x\cdot\eta)
\nu(\mathrm{d}x)\\
&&{}
- 2Q\eta\cdot\eta- 2\int_{x\neq0} (1 - \cos x\cdot\eta) \cos
x\cdot
\xi\nu(\mathrm{d}x)\\
&=& 2a + 2\int_{x\neq0} (1 - \cos x\cdot\eta) (1-\cos
x\cdot\xi) \nu(\mathrm{d}x).
\end{eqnarray*}
Thus, $ \gamma_\eta(\xi)$ is a variogram in $\xi$. The proof is
thus complete.
\end{pf*}

Lemma \ref{lemma-schoenberg-kernel} has an obvious extension to
continuous \textit{complex}-valued functions $\gamma\dvtx\real^d\to
\mathbb{C}$ satisfying $\gamma(0)\geq0$,
$\gamma(\xi)=\overline{\gamma(-\xi )}$ and the permissibility condition
\eqref{eq:variogram} for all $\xi_1, \ldots , \xi_n\in\real^d$. Since
such functions also enjoy a (complex) L\'evy--Khinchine representation
(see \cite{berfor}), exactly the same argument as in the proof of Lemma
\ref{lemma-schoenberg-kernel} shows that for every fixed
$\xi_0\in\real^d$,
\[
\gamma_{\xi_0}(\xi) := 2\gamma(\xi) + 2\Re\gamma(\xi_0) -
\gamma(\xi
-\xi_0) - \gamma(\xi+\xi_0)
\]
is permissible and has the L\'evy--Khinchine representation
\[
\gamma_{\xi_0}(\xi) = 2\int_{y\neq0} (1-\eee^{\mathrm{i}y\cdot\xi} )
\bigl(1-\cos
(y\cdot\xi_0) \bigr) \nu(\mathrm{d}y),
\]
where $\nu$ is the L\'{e}vy measure of $\gamma$. Lemma 17 is a very
special case of \cite{berfor}, Proposition 18.2, which goes back to
Harzallah \cite{Har2}.

\section*{Acknowledgements}
We are grateful to three anonymous referees for their valuable
comments which helped to improve the presentation of this paper. This
work was initiated when the first-named author was visiting the
Technical University of Dresden. He is grateful to Professor Zolt\'{a}n
Sasv\'{a}ri for the invitation and the hospitality he was shown.
Emilio Porcu acknowledges the DFG-SNF Research Group FOR916,
subproject A2.

\printhistory

\end{document}